\newtheorem{theorem}{Theorem}
\newtheorem{lemma}{Lemma}
\newtheorem{proposition}{Proposition}
\newcommand{\be}{\begin{eqnarray}}
\newcommand{\ee}{\end{eqnarray}}
\newcommand{\nn}{{\nonumber}}
\newcommand{\Z}{{\mathbb Z}}
\newcommand{\sF}{{\cal F}}
\newcommand{\sL}{{\cal L}}
\newcommand{\Mo}{{\rm Mo}}
\begin{document}
\title{The Mostar index of Fibonacci and Lucas cubes}
\author{
	\"Omer E\u{g}ecio\u{g}lu
	\thanks{Department of Computer Science, University of California Santa Barbara, Santa Barbara, California 93106, USA. email: omer@cs.ucsb.edu}
	\quad
	Elif Sayg{\i}
	\thanks{Department of Mathematics and Science Education, Hacettepe University, 06800, Ankara, Turkey. email: esaygi@hacettepe.edu.tr}
	\quad
	Z\"ulf\"ukar Sayg{\i}
	\thanks{Department of Mathematics, TOBB University of Economics and Technology, 06560, Ankara, Turkey. email: zsaygi@etu.edu.tr}
}


\maketitle

\begin{abstract}
The Mostar index of a graph was defined by
Do\v{s}li\'{c}, Martinjak, \v{S}krekovski, Tipuri\'{c} Spu\v{z}evi\'{c} and Zubac
in the context of the study of the properties of chemical graphs.
It measures how far a given graph is from being distance-balanced.
In this paper, we determine the 
Mostar index of two well-known families of graphs: Fibonacci cubes and Lucas cubes.\\
	\\
	\textbf{Keywords}: 	Fibonacci cube, Lucas cube, Mostar index. 
	\\
	\textbf{MSC[2020]}:{ 05C09 \and 05C12 \and 05A15}
\end{abstract}

\maketitle

\section{Introduction}\label{section.intro}
We consider 
what is termed the {\em Mostar index} of Fibonacci and Lucas cubes. 
These two families of graphs 
are special subgraphs of hypercube graphs.
They were introduced as alternative interconnection networks to 
hypercubes, and have been studied extensively because of their 
interesting graph theoretic properties. 
The Mostar index of a graph was introduced in \cite{Mostar1}.

Let $G = (V(G),E(G))$ be a graph with vertex set $V(G)$ and edge set $E(G)$. 
For any $uv\in E(G)$ let $n_u(G)$ denote the number of vertices in $V(G)$ 
that are closer (w.r.t. the standard shortest path metric) to $u$ than to $v$; 
and let $n_v(G)$ denote the number of vertices in $V(G)$  that are closer to $v$ than to $u$. 
The
{\em Mostar index} of $G$ is defined in \cite{Mostar1} as
$$\Mo(G) = \sum_{uv\in E(G)}|n_u(G)-n_v(G)|~.$$
When
$G$ is clear from the context we will write
$n_u = n_u(G)$ and $n_v = n_v(G).$

Distance related properties of graphs such 
as the Wiener index, irregularity and Mostar index have been studied for various families of graphs 
in the literature. 

The Wiener index $W(G)$ of a connected graph $G$ is defined as the sum of 
distances over all unordered pairs of vertices of $G$. It is 
determined for Fibonacci cubes and Lucas cubes in \cite{Wiener.Fibo}. The irregularity 
of a graph is another distance invariant measuring how much the graph differs 
from a regular graph 
and  Albertson index (irregularity) is defined as the sum of $|deg(u)-deg(v)|$ over all 
edges $uv$ in the graph \cite{Albertson}. 
The irregularity of Fibonacci cubes and Lucas cubes are studied in 
\cite{Klavzar_irr,FiboLucas_irr}. The relation between 
the Mostar index and the irregularity of graphs and their difference are investigated 
in \cite{Mostar3}. Recently, the Mostar index of trees and product graphs have been investigated in \cite{Mostar2}.

In this work, we determine the Mostar index of Fibonacci cubes and Lucas cubes. 
As a consequence 
we derive a relation
between the Mostar and the Wiener indices for Fibonacci cubes, giving
an alternate expression to the closed formula for $W(\Gamma_n)$  calculated in \cite{Wiener.Fibo}.

\section{Preliminaries}\label{section.prelim}
We use the notation $[n]=\{1,2, \ldots,n\}$ for any $n\in\Z^+$. Let $B=\{0,1\}$ and 
$$B_n=\{b_1 b_2 \ldots b_n\mid b_i\in B,\ \forall i\in[n]\}$$
denote the set of all binary strings of length $n$. Special subsets of $B_n$ defined as
$$\sF_n=\{b_1 b_2 \ldots b_n\mid b_i\cdot b_{i+1}=0,\ \forall i\in[n-1]\}$$
and
$$\sL_n=\{b_1b_2 \ldots b_n\mid b_i\cdot b_{i+1}=0,\ \forall i\in[n-1]\mbox{ and } b_1\cdot b_{n}=0 \}$$
are the set of all Fibonacci strings and Lucas strings of length $n$ respectively.

The $n$-dimensional hypercube $Q_n$ has vertex set $B_n$. 
Two vertices are adjacent if and only if they differ in exactly one 
coordinate in their string representation. 
For $n\ge 1$ the Fibonacci cube $\Gamma_n$ and the Lucas cube $\Lambda_n$ are defined 
as the subgraphs of $Q_n$ induced by the Fibonacci strings 
$\sF_n$ and Lucas strings $\sL_n$ of length 
$n$ \cite{Hsu,Lucas}. For convenience, we take $\Gamma_0 = K_1$ whose only 
vertex is represented by the empty string. 
 
One can classify the binary strings defining the vertices of $\Gamma_n$ by 
whether or not $b_1= 0$ or $b_1=1$. In this way $\Gamma_n$ decomposes into a 
subgraph $\Gamma_{n-1}$ whose vertices start with 0 and a subgraph $\Gamma_{n-2}$ 
whose vertices start with 10 in $\Gamma_n$. This decomposition can be denoted by
\be
\Gamma_n= 0\Gamma_{n-1}+10\Gamma_{n-2}~.
\nn\ee
Furthermore, $0\Gamma_{n-1}$ in turn has a subgraph $00\Gamma_{n-2}$ and  
there is a perfect matching between $00\Gamma_{n-2}$ and $10\Gamma_{n-2}$, 
whose edges are called {\em link edges}. 
This decomposition is the {\em fundamental decomposition} of $\Gamma_n$. In a similar way we can also decompose $\Gamma_n$ as
\be
\Gamma_n= \Gamma_{n-1}0+\Gamma_{n-2}01~.
\nn\ee
We refer to \cite{survey} for further details on $\Gamma_n$ .

For $n\ge 2$, $\Lambda_n$ is obtained from $\Gamma_n$ by 
deleting the vertices that start and end with 1. This gives 
the fundamental decomposition of $\Lambda_n$ as 
\be
\Lambda_n= 0\Gamma_{n-1}+10\Gamma_{n-3}0~.
\nn\ee
Here
$0\Gamma_{n-1}$ has a subgraph $00\Gamma_{n-3}0$ and there is a 
perfect matching between $00\Gamma_{n-3}0$ and $10\Gamma_{n-3}0$.

Fibonacci numbers $f_n$ are defined by the recursion 
$f_n = f_{n-1} + f_{n-2}$ for $n\ge 2$, with $f_0 = 0$ and $f_1 = 1$. 
Similarly, the Lucas numbers $L_n$ are defined
by $L_n = L_{n-1} + L_{n-2}$ for $n\ge 2$, with $L_0 = 2$ and $L_1 = 1$.
It is well-known that $|V(Q_n)|=|B_n|=2^n$, 
$|V(\Gamma_n)|=|\sF_n|=f_{n+2}$ and $|V(\Lambda_n)|=|\sL_n|=L_{n}$. 

For any binary string $s$ we let $w_H(s)$ denote 
the Hamming weight of $s$, that is, the number of 
its nonzero coordinates. The {\em XOR} of two binary strings $s_1$ and $s_2$ of length 
$n$, denoted by $s_1 \oplus s_2$ is defined as the string of length $n$ 
whose coordinates are the modulo 2 sum of the coordinates of $s_1$ and $s_2$. 
The distance $d(u,v)$ between two vertices $u$ and $v$ 
of the hypercube, the Fibonacci cube and the Lucas cube
is equal to the Hamming distance between the string representations of $u$ and $v$.
In other words, 
$d(u,v)=d_H(s_1,s_2)=w_H(s_1 \oplus s_2)$ for any of these graphs,
by assuming $u$ and $v$ have string representations $s_1$ and $s_2$, respectively. 
    
\section{The Mostar index of Fibonacci cubes}
For any $uv\in E(\Gamma_{n})$ let the string representations of $u$ and $v$ be 
$u_1 u_2\ldots u_n$ and $v_1 v_2\ldots v_n$, respectively. By the structure of 
$\Gamma_{n}$ we know that $d(u,v)=1$, that is, there is only one 
index $k$ for which $u_k\neq v_k$. 
\begin{lemma}\label{lemma1}
	For $n\ge 2$, assume that $uv\in E(\Gamma_n)$ with $u_k=0$ and $v_k=1$ for some 
$k\in [n]$. Then $n_u(\Gamma_n)=f_{k+1}f_{n-k+2}$ and $n_v(\Gamma_n)=f_{k}f_{n-k+1}$.
\end{lemma}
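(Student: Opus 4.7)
The plan is to reduce the lemma to a simple counting problem. Since $u$ and $v$ differ in exactly one coordinate, namely the $k$th, the Hamming distance from any $w \in \sF_n$ to $u$ and to $v$ differs by exactly $1$. Thus $w$ is closer to $u$ precisely when $w_k = u_k = 0$ and closer to $v$ precisely when $w_k = v_k = 1$. This immediately identifies $n_u$ with the number of Fibonacci strings $w \in \sF_n$ having $w_k = 0$, and $n_v$ with the number having $w_k = 1$, so the lemma becomes a pure enumeration of Fibonacci strings with a prescribed bit at position $k$.

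For $n_u$ I would observe that fixing $w_k = 0$ decouples the two flanking segments: the prefix $w_1 \cdots w_{k-1}$ and the suffix $w_{k+1} \cdots w_n$ may be chosen independently, subject only to the internal Fibonacci constraints on each, since the adjacency constraints across the junctions $w_{k-1}w_k$ and $w_k w_{k+1}$ are satisfied automatically by the zero at position $k$. Hence each segment ranges freely over $\sF_{k-1}$ and $\sF_{n-k}$ respectively, giving $n_u = |\sF_{k-1}| \cdot |\sF_{n-k}| = f_{k+1} f_{n-k+2}$ by the identity $|\sF_m| = f_{m+2}$.

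For $n_v$ the setup is parallel but involves a forcing step: $w_k = 1$ obliges both $w_{k-1} = 0$ and $w_{k+1} = 0$ whenever those positions exist, otherwise $w$ would fail the Fibonacci condition. After pinning down these three bits, the remaining prefix $w_1 \cdots w_{k-2}$ and suffix $w_{k+2} \cdots w_n$ again range independently over $\sF_{k-2}$ and $\sF_{n-k-1}$, which gives $n_v = |\sF_{k-2}| \cdot |\sF_{n-k-1}| = f_k f_{n-k+1}$.

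The only genuine obstacle I anticipate is bookkeeping at the boundary values $k = 1$ and $k = n$, where one of the flanking segments collapses or one of the forced zeros disappears. These degenerate segments should be read as empty, so the factors are $1$; conveniently $f_1 = 1$ and $|\sF_0| = f_2 = 1$, so both uniform expressions $f_{k+1} f_{n-k+2}$ and $f_k f_{n-k+1}$ remain valid throughout the full range $1 \le k \le n$. Notably no appeal to the fundamental decomposition of $\Gamma_n$ or any induction on $n$ is required for this lemma.
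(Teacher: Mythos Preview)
Your proposal is correct and follows essentially the same approach as the paper: both reduce the question to counting Fibonacci strings with a prescribed $k$th coordinate, then split the string at position $k$ into independent Fibonacci substrings. The only cosmetic difference is that the paper treats the boundary cases $k=1$ and $k=n$ by explicitly invoking the fundamental decomposition $\Gamma_n = 0\Gamma_{n-1} + 10\Gamma_{n-2}$ (resp.\ its mirror), whereas you absorb these cases into the general formula via the empty-string convention and $f_1=f_2=1$; the underlying counts are identical.
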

\begin{proof}
The result is clear for $n=2$. Assume that $n\ge 3$, $1<k<n$ and let 
$\alpha\in V(\Gamma_{n})$ have string representation $b_1 b_2 \ldots b_n$. 
Since $uv\in E(\Gamma_n)$, $u$ and $v$ must be of the form 
$a_1\ldots a_{k-1}0a_{k+1}\ldots a_n$ and $a_1\ldots a_{k-1}1a_{k+1}\ldots a_n$, respectively. 
Since
$v\in V(\Gamma_n)$ we must have $a_{k-1}=a_{k+1}=0$.
From these representations we observe that the difference between 
$d(\alpha,u)$ and $d(\alpha,v)$ depends on the value of $b_k$ only. 
If $b_k=0$ we have $d(\alpha,u)=d(\alpha,v)-1$ and if $b_k=1$ we have 
$d(\alpha,u)=d(\alpha,v)+1$. Therefore, 
the vertices whose $k$th coordinate is 0 are closer to $u$ than $v$; and 
the vertices whose $k$th coordinate is 1 are closer to $v$ than $u$. 
Hence $n_u(\Gamma_n)$ is equal to the number of vertices in $\Gamma_n$ 
whose $k$th coordinate is 0. 
These vertices have string representation of the form $\beta_10\beta_2$ where $\beta_1$ is 
a Fibonacci string of length $k-1$ and $\beta_2$ is a Fibonacci string of length $n-k$. 
Consequently
$n_u(\Gamma_n)=f_{k+1}f_{n-k+2}$. Similarly $n_u(\Gamma_n)$ 
is number of vertices of the form $\beta_3010\beta_4$, and this is equal to $f_{k}f_{n-k+1}$.

For the case $k=1$ we have $u\in V(0\Gamma_{n-1})$ and $v\in V(10\Gamma_{n-2})$. Then 
$n_u(\Gamma_n)=|V(0\Gamma_{n-1})|=f_{n+1}$ and $n_v(\Gamma_n)=|V(10\Gamma_{n-2})|=f_{n}$. 
Similarly, for $k=n$ we have $u\in V(\Gamma_{n-1}0)$ and $v\in V(\Gamma_{n-2}01)$. This 
gives again $n_u(\Gamma_n)=f_{n+1}$ and $n_v(\Gamma_n)=f_{n}$ for $k=n$. As $f_1=f_2=1$, these 
are also of the form claimed.
\end{proof}

To find the Mostar index of Fibonacci cubes we only need to find the number of 
edges $uv$ in $\Gamma_n$ for which $u_k=0$ and $v_k=1$ for a 
fixed $k\in [n]$ and add up these contributions over $k$.
\begin{lemma}\label{lemma2}
	For $n\ge 2$, assume that $uv\in E(\Gamma_n)$ with $u_k=0$ and $v_k=1$ for some 
$k\in [n]$. Then the number of such edges in $\Gamma_n$ is equal to $f_{k}f_{n-k+1}$.
\end{lemma}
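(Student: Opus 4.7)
The plan is to identify each edge with a suitable triple (prefix, flip-position, suffix) and then count the allowed prefixes and suffixes separately.

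Fix $k \in [n]$. An edge $uv \in E(\Gamma_n)$ differing in coordinate $k$ with $u_k = 0$, $v_k = 1$ has string representations
\[
u = a_1 \ldots a_{k-1}\, 0\, a_{k+1}\ldots a_n, \qquad
v = a_1 \ldots a_{k-1}\, 1\, a_{k+1}\ldots a_n,
\]
and both $u$ and $v$ must be Fibonacci strings. For $u$ this amounts to asking that the prefix $\alpha = a_1\ldots a_{k-1}$ and suffix $\beta = a_{k+1}\ldots a_n$ are themselves Fibonacci strings. For $v$ we additionally need no two consecutive $1$s to be created at positions $k-1,k$ or $k,k+1$, that is (when the relevant positions exist), $a_{k-1} = 0$ and $a_{k+1} = 0$. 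So the problem reduces to counting Fibonacci strings of length $k-1$ ending in $0$, times Fibonacci strings of length $n-k$ starting with $0$.

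The key combinatorial ingredient I would use is that the number of Fibonacci strings of length $m \ge 1$ ending in $0$ (equivalently, starting with $0$) is $f_{m+1}$. This follows immediately from $|\sF_m| = f_{m+2}$ together with the observation that any Fibonacci string of length $m$ ending in $1$ must end in $01$, and removing this trailing $01$ gives a bijection with $\sF_{m-2}$, so the strings ending in $1$ number $f_m$ and the strings ending in $0$ number $f_{m+2} - f_m = f_{m+1}$.

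Applied to our situation for $1 < k < n$, the number of valid prefixes is $f_{(k-1)+1} = f_k$ and the number of valid suffixes is $f_{(n-k)+1} = f_{n-k+1}$, giving $f_k f_{n-k+1}$ edges. For the boundary case $k=1$, the prefix is empty and the suffix is any Fibonacci string of length $n-1$ starting with $0$, giving $f_n = f_1 f_n$; symmetrically $k=n$ gives $f_n = f_n f_1$, so the formula holds uniformly for all $k \in [n]$ since $f_1 = 1$.

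There is no real obstacle here; the only thing one has to be a bit careful about is bookkeeping at the boundary indices $k=1$ and $k=n$, and matching the Fibonacci indexing so that the two edge cases collapse into the same closed form $f_k f_{n-k+1}$. The lemma about strings with a prescribed endpoint bit is the one fact that needs explicit justification, but it is short and follows directly from the standard decomposition of $\sF_m$ already used in the fundamental decomposition of $\Gamma_n$.
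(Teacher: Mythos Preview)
Your proof is correct and follows essentially the same approach as the paper: both count the edges by observing that $a_{k-1}=a_{k+1}=0$ are forced and then counting the admissible prefixes and suffixes independently. The only cosmetic difference is that the paper strips off the forced zeros and directly counts arbitrary Fibonacci strings of lengths $k-2$ and $n-k-1$ (giving $f_k$ and $f_{n-k+1}$ immediately), whereas you phrase the same count as ``Fibonacci strings of length $m$ ending in $0$'' and prove the auxiliary identity $f_{m+2}-f_m=f_{m+1}$; this detour is unnecessary but harmless.
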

\begin{proof}
	As in the proof of Lemma \ref{lemma1} the result is clear for $n=2$. 
Assume that $n\ge 3$. For $1<k<n$ we know that $u$ and $v$ are of the 
form $a_1\ldots a_{k-2}000a_{k+2}\ldots a_n$ and $a_1\ldots a_{k-2}010a_{k+2}\ldots a_n$. 
Then the number edges $uv$ in $\Gamma_n$ satisfying $u_k=0$ and $v_k=1$ is equal to 
the number of vertices of the form $a_1\ldots a_{k-2}000a_{k+2}\ldots a_n$, 
which gives the desired result. 
	
For the boundary cases
$k=1$ and $k=n$ we need to find the number of 
vertices of the form $00a_3\ldots a_n$ and $a_1\ldots a_{n-2}00$, respectively. Clearly, 
this number is equal to $|V(00\Gamma_{n-2})|=f_n$ and $f_1=1$. This completes the proof.
\end{proof}  

Using Lemma \ref{lemma1} and Lemma \ref{lemma2} we obtain the following main result.
\begin{theorem}\label{thm.main}
The Mostar index of Fibonacci cube $\Gamma_n$ is given by
\begin{equation}\label{formula1}
\Mo(\Gamma_n) = 
	 \sum_{k=1}^{n} f_{k}f_{n-k+1}\left(f_{k+1}f_{n-k+2}-f_{k}f_{n-k+1}\right) ~.
\end{equation}
\end{theorem}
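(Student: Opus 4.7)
The plan is to combine Lemma \ref{lemma1} and Lemma \ref{lemma2} in a straightforward bookkeeping argument. First I would expand the Mostar index from its definition,
$$\Mo(\Gamma_n) = \sum_{uv\in E(\Gamma_n)} |n_u(\Gamma_n) - n_v(\Gamma_n)|,$$
and partition the edge set $E(\Gamma_n)$ according to the unique coordinate $k \in [n]$ at which the two endpoints of an edge differ. This partition is well defined because every edge of $\Gamma_n$ joins two vertices at Hamming distance exactly $1$, so each edge lies in precisely one class.

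On each edge in class $k$, I would orient it so that $u_k = 0$ and $v_k = 1$. Lemma \ref{lemma1} then delivers $n_u(\Gamma_n) = f_{k+1}f_{n-k+2}$ and $n_v(\Gamma_n) = f_k f_{n-k+1}$, while Lemma \ref{lemma2} supplies $f_k f_{n-k+1}$ as the number of edges in the class. Before assembling the sum, I would observe that $n_u \ge n_v$ on every such edge, so the absolute value can be dropped: this is immediate from the monotonicity $f_{k+1}\ge f_k$ and $f_{n-k+2}\ge f_{n-k+1}$ for $1\le k\le n$, with the boundary cases $k=1$ and $k=n$ already recorded in the claimed product form inside the proof of Lemma \ref{lemma1}.

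Multiplying the per-edge contribution $f_{k+1}f_{n-k+2} - f_k f_{n-k+1}$ by the edge count $f_k f_{n-k+1}$ and summing over $k = 1, \dots, n$ yields formula \eqref{formula1}. There is no real obstacle here: Lemmas \ref{lemma1} and \ref{lemma2} have already absorbed the combinatorial content, namely identifying which vertices lie closer to each endpoint and counting the edges that disagree at coordinate $k$. The remaining work is pure assembly, with the only subtlety being the brief monotonicity check that lets the absolute value be removed termwise.
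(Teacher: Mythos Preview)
Your proposal is correct and follows essentially the same approach as the paper: partition $E(\Gamma_n)$ by the coordinate $k$ where the endpoints differ, apply Lemma~\ref{lemma1} for the per-edge contribution and Lemma~\ref{lemma2} for the edge count, and sum over $k$. Your explicit monotonicity check to drop the absolute value is a small addition that the paper leaves implicit.
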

\begin{proof}
Let $uv\in E(\Gamma_n)$ with $u_k=0$ and $v_k=1$ for some $k\in [n]$. Then from Lemma \ref{lemma1} we know that 
$$|n_u-n_v|=f_{k+1}f_{n-k+2}-f_{k}f_{n-k+1} $$ 
and therefore using  Lemma \ref{lemma2} we have
\be
	\Mo(\Gamma_n) &=& \sum_{uv\in E(\Gamma_n)}|n_u-n_v| \nn\\
	&=& \sum_{k=1}^{n} f_{k}f_{n-k+1}\left(f_{k+1}f_{n-k+2}-f_{k}f_{n-k+1}\right) 
~.
\nn\ee
\end{proof}
Note that 
$f_{k+1}f_{n-k+2}-f_{k}f_{n-k+1}= f_{k}f_{n-k}+f_{k-1}f_{n-k+2}$ so that we can equivalently 
write 
\begin{equation}\label{formula2}
\Mo(\Gamma_n) = 
\sum_{k=1}^{n} f_{k}f_{n-k+1}\left(f_{k}f_{n-k}+f_{k-1}f_{n-k+2}\right)~.
\nn\end{equation}

In Section \ref{sec.rec} Theorem \ref{thm.main3},
we present a closed form formula for $\Mo(\Gamma_n)$ obtained by using the 
theory of generating functions. 

Next we consider the Mostar index of Lucas cubes.

\section{The Mostar index of Lucas cubes}
We know that $\Lambda_{2}=\Gamma_2$ and therefore
$\Mo(\Gamma_2)=\Mo(\Lambda_2)=2$. 

For any $uv\in E(\Lambda_{n})$ let the string representations of $u$ 
and $v$ be  $u_1 u_2\ldots u_n$ and $v_1v_2\ldots v_n$, respectively. 
We know that $d(u,v)=1$ and there is only one index $k$ for which $u_k\neq v_k$. 
Similar to Lemma \ref{lemma1} and Lemma \ref{lemma2}
we have the following result.
\begin{lemma}\label{lemma3}
	For $n\ge 3$, assume that $uv\in E(\Lambda_n)$ with $u_k=0$ and $v_k=1$ for some 
$k\in [n]$. Then $n_u(\Lambda_n)=f_{n+1}$ and $n_v(\Lambda_n)=f_{n-1}$.
\end{lemma}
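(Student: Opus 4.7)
The plan is to mirror the structure of the proof of Lemma~\ref{lemma1}, with the key new ingredient being the cyclic symmetry enjoyed by Lucas strings (which is exactly why the answer no longer depends on $k$).

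First I would establish, as in the Fibonacci case, that $n_u(\Lambda_n)$ equals the number of vertices of $\Lambda_n$ whose $k$-th coordinate is $0$ and $n_v(\Lambda_n)$ equals the number of vertices whose $k$-th coordinate is $1$. The argument is identical to the one in Lemma~\ref{lemma1}: since $u$ and $v$ agree on every coordinate except $k$, and since distance in $\Lambda_n$ is the Hamming distance (as recorded in Section~\ref{section.prelim}), for any $\alpha \in V(\Lambda_n)$ one has $d(\alpha,u) = d(\alpha,v) - 1$ if $\alpha_k = 0$ and $d(\alpha,u) = d(\alpha,v) + 1$ if $\alpha_k = 1$.

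Next I would count the Lucas strings with a $1$ in position $k$. Because $\Lambda_n$ enforces the cyclic condition $b_i b_{i+1} = 0$ for $i \in [n-1]$ together with $b_1 b_n = 0$, fixing $b_k = 1$ forces $b_{k-1} = b_{k+1} = 0$, where indices are read cyclically modulo $n$ (so for $k = 1$ the neighbors are $b_n$ and $b_2$, and for $k = n$ they are $b_{n-1}$ and $b_1$). After removing these three positions, the remaining $n-3$ coordinates form a contiguous arc with no wraparound constraint, i.e.\ a Fibonacci string of length $n-3$. This gives exactly $f_{(n-3)+2} = f_{n-1}$ such Lucas strings, independent of $k$, exhibiting the rotational symmetry.

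Finally, to obtain $n_u(\Lambda_n)$, I would subtract from the total: $n_u(\Lambda_n) = L_n - f_{n-1}$. Invoking the standard identity $L_n = f_{n-1} + f_{n+1}$ then yields $n_u(\Lambda_n) = f_{n+1}$, completing the proof. There is no real obstacle here; the only place that requires a little care is making the cyclic indexing unambiguous for the boundary values $k=1$ and $k=n$, but the Lucas condition $b_1 b_n = 0$ handles both cases uniformly, which is precisely why the formula has no $k$-dependence and is simpler than its Fibonacci analogue.
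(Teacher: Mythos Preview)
Your proof is correct and follows essentially the same approach as the paper: reduce $n_u$ and $n_v$ to counting Lucas strings with a prescribed value in the $k$-th coordinate, then count. The only cosmetic differences are that you treat all $k$ uniformly via cyclic indexing (the paper splits off $k\in\{1,n\}$ and appeals to the fundamental decomposition), and you obtain $n_u$ by subtraction using $L_n=f_{n-1}+f_{n+1}$ rather than by a direct count.
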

\begin{proof}
	Assume that $1<k<n$ and let $\alpha\in V(\Lambda_n)$ having string representation $b_1b_2\ldots b_n$. 
	Since $uv\in E(\Lambda_n)$, 
$u$ and $v$ must be of the form $a_1\ldots a_{k-2}000a_{k+2}\ldots a_n$ and 
$a_1\ldots a_{k-2}010a_{k+2}\ldots a_n$, respectively. Then, if $b_k=0$ we have $d(\alpha,u)=d(\alpha,v)-1$ and if $b_k=1$ we have $d(\alpha,u)=d(\alpha,v)+1$. Therefore, $n_u(\Lambda_n)$ and $n_v(\Lambda_n)$  are equal to the number of vertices in $\Lambda_n$ 
whose $k$th coordinate is 0 and 1, respectively. 
Therefore we need to count the number of Lucas strings of the form $\beta_10\beta_2$ and $\beta_3010\beta_4$ which gives $n_u(\Lambda_n)=f_{n+1}$ and $n_v(\Lambda_n)=f_{n-1}$.
	
	For the case $k=1$, using the fundamental decomposition of $\Lambda_n$ we have $u\in V(0\Gamma_{n-1})$ and $v\in V(10\Gamma_{n-3}0)$. Then 
	$n_u(\Lambda_n)=|V(0\Lambda_n)|=f_{n+1}$ and $n_v(\Lambda_n)=|V(10\Gamma_{n-3}0)|=f_{n-1}$. 
	Similarly, for $k=n$ we have the same results $n_u(\Lambda_n)=f_{n+1}$ and $n_v(\Lambda_n)=f_{n-1}$.
	  
\end{proof}
For any $uv\in E(\Lambda_{n})$ using Lemma \ref{lemma3} we have 
$$|n_u(\Lambda_n)-n_v(\Lambda_n)|=f_{n+1}-f_{n-1}=f_n~.$$
Since the number of edges in $\Lambda_n$ is $nf_{n-1}$ \cite{Lucas}, similar to the Theorem \ref{thm.main} we have the following result .
\begin{theorem}\label{thm.main2}
	The Mostar index of Lucas cube $\Lambda_n$ is 
given by
	$$\Mo ( \Lambda_n) = n f_n f_{n-1}~.$$
\end{theorem}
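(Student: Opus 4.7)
The plan is essentially immediate given Lemma \ref{lemma3} and the known edge count of $\Lambda_n$ recorded in \cite{Lucas}. The first step is to observe from Lemma \ref{lemma3} that for \emph{every} edge $uv \in E(\Lambda_n)$, regardless of which coordinate $k$ happens to differ between $u$ and $v$, the absolute difference $|n_u(\Lambda_n) - n_v(\Lambda_n)|$ takes the same value
$$f_{n+1} - f_{n-1} = f_n,$$
where the equality uses the defining recurrence of the Fibonacci numbers. This uniformity across edges is the key structural feature distinguishing the Lucas case from the Fibonacci case of Theorem \ref{thm.main}: there is no $k$-dependence to sum over.

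The second step is then to pull this constant outside the defining sum for $\Mo$, so that
$$\Mo(\Lambda_n) = \sum_{uv \in E(\Lambda_n)} |n_u(\Lambda_n) - n_v(\Lambda_n)| = |E(\Lambda_n)| \cdot f_n.$$
Substituting the known value $|E(\Lambda_n)| = n f_{n-1}$ from \cite{Lucas} yields the claimed formula $\Mo(\Lambda_n) = n f_n f_{n-1}$.

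The only gap to address is the base case $n = 2$, since Lemma \ref{lemma3} is stated for $n \ge 3$. This is handled by the remark at the start of Section 4 that $\Lambda_2 = \Gamma_2$, giving $\Mo(\Lambda_2) = 2$, which agrees with $2 \cdot f_2 \cdot f_1 = 2$. Thus no genuine obstacle remains; the real work was done in proving Lemma \ref{lemma3}, where the cyclic nature of the Lucas admissibility condition forces the two counts $f_{n+1}$ and $f_{n-1}$ to be independent of the position $k$ of the flipped bit.
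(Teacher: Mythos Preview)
Your proof is correct and follows essentially the same route as the paper: invoke Lemma~\ref{lemma3} to see that every edge contributes $f_{n+1}-f_{n-1}=f_n$, then multiply by the edge count $|E(\Lambda_n)|=nf_{n-1}$ from \cite{Lucas}. Your explicit handling of the base case $n=2$ is a small clarification beyond what the paper spells out, but the argument is otherwise identical.
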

Here we remark that the vertices of Lucas cubes are represented 
by Lucas strings which are circular binary strings that avoid the pattern ``11".
Because of this symmetry, the derivation of a
closed formula of Theorem \ref{thm.main2} 
for the Mostar index of Lucas 
cube $\Lambda_n$ is easier than the one for $\Gamma_n$, in which 
the first and the last coordinates behave differently from the others. 
 
\section{A closed formula for $\Mo(\Gamma_n)$}\label{sec.rec}
By the fundamental decomposition of $\Gamma_n$, the set 
of edges $E(\Gamma_n)$ consists of three distinct types:
\begin{enumerate}
\item The edges in $0\Gamma_{n-1}$, which we  denote by $E(0\Gamma_{n-1})$.
\item The link edges between $10\Gamma_{n-2}$ and $00\Gamma_{n-2}\subset 0\Gamma_{n-1}$, which we
denote by $C_{n}$.
\item The edges in $10\Gamma_{n-2}$, which we denote by $E(10\Gamma_{n-2})$~.
\end{enumerate}
In other words we have the partition
$$E(\Gamma_n)=E(0\Gamma_{n-1})
\cup C_{n} \cup E(10\Gamma_{n-2}) ~.$$
We keep track of the contribution of each part of this decomposition by setting
for $n\ge 2$,
\be\label{defn.Mn}
M_n(x,y,z)=\sum_{uv\in E(0\Gamma_{n-1})}|n_u-n_v|x+
\sum_{uv\in C_{n}}|n_u-n_v|y +
\sum_{uv\in E(10\Gamma_{n-2})}|n_u-n_v|z ~.
\ee 
Clearly, $\Mo(\Gamma_{n})=M_n(1,1,1)$.
By direct inspection we observe that
\begin{eqnarray*}
	M_2&=&x+y \\
	M_3&=&4x+2y+z \\
	M_4&=&16x+6y+6z \\
	M_5&=&54x+15y+23z 
\end{eqnarray*}
which gives 
\begin{eqnarray*}
	\Mo(\Gamma_{2})&=&M_2(1,1,1)=2 \\
	\Mo(\Gamma_{3})&=&M_3(1,1,1)=7 \\
	\Mo(\Gamma_{4})&=&M_4(1,1,1)=28 \\
	\Mo(\Gamma_{5})&=&M_5(1,1,1)=92~, 
\end{eqnarray*}
consistent with the values that are calculated using Theorem \ref{thm.main}.
 
By using the fundamental decomposition of $\Gamma_n$ we obtain the following useful result.
\begin{proposition}\label{prop.main.rec}
 For $n\ge 2$ the polynomial $M_n(x,y,z)$ satisfies 
 $$M_n(x,y,z)=M_{n-1}(x+z,0,x)+M_{n-2}(2x+z,x+z,x+z)+f_{n-1}\left(f_{n}+f_{n-2}\right)x+f_nf_{n-1}y$$
 where $M_0(x,y,z)=M_1(x,y,z)=0$.  
\end{proposition}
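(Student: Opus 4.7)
The plan is to use the fundamental decomposition $\Gamma_n=0\Gamma_{n-1}+10\Gamma_{n-2}$ to split the vertex set and, for each edge $uv\in E(\Gamma_n)$, to decompose $n_u(\Gamma_n)-n_v(\Gamma_n)=\Delta^{(0)}(uv)+\Delta^{(10)}(uv)$, where $\Delta^{(0)}$ is the signed contribution of vertices in $0\Gamma_{n-1}$ and $\Delta^{(10)}$ that of vertices in $10\Gamma_{n-2}$. The distance identities $d(0\alpha,0\beta)=d(\alpha,\beta)$, $d(10\alpha,10\beta)=d(\alpha,\beta)$, and $d(0\alpha,10\beta)=1+d(\alpha,0\beta)$ make each $\Delta$-piece collapse to a difference of the form $|n-n|$ evaluated on a specific edge of a smaller Fibonacci cube.

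Link edges are handled directly: by Lemma \ref{lemma1} with $k=1$ every $uv\in C_n$ has $|n_u-n_v|_{\Gamma_n}=f_{n-1}$, and since $|C_n|=f_n$ they contribute the $f_n f_{n-1}y$ summand. For an edge $(0u')(0v')\in E(0\Gamma_{n-1})$ coming from $u'v'\in E(\Gamma_{n-1})$ the piece $\Delta^{(0)}$ equals $|n_{u'}-n_{v'}|_{\Gamma_{n-1}}$, while $\Delta^{(10)}$ splits into three subcases according to which block of $\Gamma_{n-1}$ contains $u'v'$: (i) if $u'v'\in E(0\Gamma_{n-2})$ with $u'=0u''$, $v'=0v''$, then $\Delta^{(10)}=|n_{u''}-n_{v''}|_{\Gamma_{n-2}}$; (ii) if $u'v'\in C_{n-1}$, then $uv$ differs at position $k=2$ of $\Gamma_n$ and Lemma \ref{lemma1} gives the full value $|n_u-n_v|_{\Gamma_n}=f_n+f_{n-2}$, so summing over the $|C_{n-1}|=f_{n-1}$ link edges produces the explicit term $f_{n-1}(f_n+f_{n-2})x$; (iii) if $u'v'\in E(10\Gamma_{n-3})$ with $u'=10u'''$, $v'=10v'''$, then $\Delta^{(10)}=|n_{0u'''}-n_{0v'''}|_{\Gamma_{n-2}}$, with $(0u''',0v''')$ an edge of $E(0\Gamma_{n-3})\subset\Gamma_{n-2}$. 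Symmetrically, for $(10u')(10v')\in E(10\Gamma_{n-2})$ one has $\Delta^{(10)}=|n_{u'}-n_{v'}|_{\Gamma_{n-2}}$ and $\Delta^{(0)}=|n_{0u'}-n_{0v'}|_{\Gamma_{n-1}}$, with $(0u',0v')$ an edge of $E(0\Gamma_{n-2})\subset\Gamma_{n-1}$.

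Summing these contributions over all edges and grouping by source, the $E(0\Gamma_{n-2})$-sum inside $\Gamma_{n-1}$ (the $x$-coefficient of $M_{n-1}$) appears once in the $x$-coefficient of $M_n$ (from subcase (i)) and once in the $z$-coefficient (from the $E(10\Gamma_{n-2})$-edge analysis), matching the first argument $x+z$ of $M_{n-1}(x+z,0,x)$; the $E(10\Gamma_{n-3})$-sum inside $\Gamma_{n-1}$ appears only in the $x$-coefficient of $M_n$ (subcase (iii)), matching the third argument $x$; and the $C_{n-1}$-sum is absorbed into the explicit term $f_{n-1}(f_n+f_{n-2})x$, matching the second argument $0$. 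For $M_{n-2}$ the $x$-coefficient contributes twice to the $x$-coefficient of $M_n$ (once via the full $\Mo(\Gamma_{n-2})$ total in subcase (i) and once in subcase (iii)) and once to the $z$-coefficient, matching the first argument $2x+z$, while the $y$- and $z$-coefficients of $M_{n-2}$ each contribute once to the $x$-coefficient of $M_n$ (from subcase (i)) and once to the $z$-coefficient (from the $E(10\Gamma_{n-2})$-edge analysis), matching arguments $x+z$. The main obstacle is this bookkeeping, especially tracking the factor of $2$ on the $x$-coefficient of $M_{n-2}$; the algebraic cross-check that $\Delta^{(0)}+\Delta^{(10)}$ reassembles in each subcase to $f_{k+1}f_{n-k+2}-f_kf_{n-k+1}$ follows directly from the Fibonacci recursion $f_{k+1}=f_k+f_{k-1}$.
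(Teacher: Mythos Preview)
Your argument is correct and is essentially the same as the paper's: both proofs use the fundamental decomposition $\Gamma_n=0\Gamma_{n-1}+10\Gamma_{n-2}$, split each $|n_u-n_v|$ into the contributions from the two vertex blocks, reduce those contributions via the distance identities to $|n_{u'}-n_{v'}|$ on $\Gamma_{n-1}$ or $\Gamma_{n-2}$, and handle the link edges $C_n$ and $C_{n-1}$ directly via Lemma~\ref{lemma1}. The only difference is organizational---the paper runs through the three edge classes of $E(\Gamma_n)$ and then the three subcases of $E(0\Gamma_{n-1})$, whereas you introduce the notation $\Delta^{(0)},\Delta^{(10)}$ up front and defer the bookkeeping to the end; the identifications and the final tally of which coefficient lands where are identical. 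One point worth making explicit (you allude to it in your final sentence) is that $|\Delta^{(0)}+\Delta^{(10)}|=|\Delta^{(0)}|+|\Delta^{(10)}|$ because, with the convention $u_k=0$, $v_k=1$, both pieces are nonnegative by the argument of Lemma~\ref{lemma1}; this is what justifies treating the two blocks additively.
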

\begin{proof}
By the definition (\ref{defn.Mn}), there are three cases to consider:
\begin{enumerate}
\item Assume that $uv\in C_{n}$ such that $u\in V(0\Gamma_{n-1})$ and $v\in V(10\Gamma_{n-2})$:\\
We know that $d(u,v)=1$ and the string representations of $u$ and $v$ 
must be of the form $00b_3\ldots b_n$ and $10b_3\ldots b_n$, respectively. 
Then using Lemma \ref{lemma1} with $k=1$ we have $|n_u-n_v|=f_{n+1}-f_n=f_{n-1}$ 
for each edge $uv$ in $C_{n}$. As $|C_n|=f_n$ all of these edges contribute 
$f_nf_{n-1}y$ to $M_n(x,y,z)$.
	
	
\item Assume that $uv\in E(10\Gamma_{n-2})$:\\  
Let the string representations of $u$ and $v$ be
$10u_3\ldots u_n$ and $10v_3\ldots v_n$, respectively. Using the fundamental 
decomposition of $\Gamma_n$ there exist vertices of the form 
$u'=0u_3\ldots u_n$ and $v'=0v_3\ldots v_n$ in $V(\Gamma_{n-1})$; 
$u''=u_3\ldots u_n$ and $v''=v_3\ldots v_n$ in $V(\Gamma_{n-2})$. 
Then $n_u$ counts the number of vertices $0\alpha\in V(0\Gamma_{n-1})$ 
and $10\beta \in V(10\Gamma_{n-2})$ satisfying $d(0\alpha,u)<d(0\alpha,v)$ 
and $d(10\beta,u)<d(10\beta,v)$. For any $0\alpha\in V(0\Gamma_{n-1})$ 
we know that $d(0\alpha,u)=d(\alpha,u')+1$ and $d(0\alpha,v)=d(\alpha,0v')+1$.
Therefore, for a fixed $0\alpha\in V(0\Gamma_{n-1})$, 
$d(\alpha,u')<d(\alpha,v')$ if and only if $d(0\alpha,u)<d(0\alpha,v)$. 
Similarly, for any $10\beta\in V(10\Gamma_{n-2})$ we have $d(10\beta,u)=d(\beta,u'')$ 
and $d(\beta,v)=d(\beta,v'')$.
Then we can write 
\be\label{eq.case.2}
\sum_{uv\in E(10\Gamma_{n-2})}\big|n_u(\Gamma_n)-n_v(\Gamma_n)\big|
&=&\sum_{u'v'\in E(\Gamma_{n-1})}\big|n_{u'}(\Gamma_{n-1})-n_{v'}(\Gamma_{n-1})\big| \nn\\
& &+\sum_{u''v''\in E(\Gamma_{n-2})}\big|n_{u''}(\Gamma_{n-2})-n_{v''}(\Gamma_{n-2})\big|~.
\nn\ee 
Note that $\Gamma_{n-1}=0\Gamma_{n-2}+10\Gamma_{n-3}$ and 
the edge $u'v'\in E(\Gamma_{n-1})$ is an edge in the set $E(0\Gamma_{n-2})$. 
Furthermore $u''v''\in E(\Gamma_{n-2})$ is an arbitrary edge. Then by the 
definition \eqref{defn.Mn} of $M_n$ we have 
$$\sum_{u'v'\in E(\Gamma_{n-1})}\big|n_{u'}(\Gamma_{n-1})-n_{v'}(\Gamma_{n-1})\big|=
M_{n-1}(1,0,0)$$
and 
$$\sum_{u''v''\in E(\Gamma_{n-2})}\big|n_{u''}(\Gamma_{n-2})-n_{v''}(\Gamma_{n-2})\big|=
M_{n-2}(1,1,1)~.$$
Hence all of these edges $uv\in E(10\Gamma_{n-2})$ contribute 
$\big(M_{n-1}(1,0,0)+M_{n-2}(1,1,1)\big)z$ to $M_n(x,y,z)$.
		
\item Assume that $uv\in E(0\Gamma_{n-1})$: \\
Since $0\Gamma_{n-1}=00\Gamma_{n-2}+010\Gamma_{n-3}$ we have three subcases to consider here.
\begin{enumerate}
\item Assume that $uv\in C_{n-1}$ such that $u\in00\Gamma_{n-2}$ 
and $v\in 010\Gamma_{n-3}$ .\\
Then using Lemma \ref{lemma1} with $k=2$ we have 
$|n_u-n_v|=f_3f_{n}-f_2f_{n-1}=2f_n-f_{n-1}=f_n+f_{n-2}$ 
for each edge $uv$ in $C_{n}$. As $|C_{n-1}|=f_{n-1}$ all of 
these edges contribute $f_{n-1}(f_n+f_{n-2})x$ to $M_n(x,y,z)$.

\item Assume that $uv\in E(010\Gamma_{n-3})$:\\
Let the string representations of $u$ and $v$ are of the form 
$010u_4\ldots u_n$ and $010v_4\ldots v_n$ respectively. Using the fundamental 
decomposition of $\Gamma_n$ there exist vertices of the form 
$u'=000u_4\ldots u_n$ and $v'=000v_4\ldots v_n$ in $V(0\Gamma_{n-1})$; 
$u''=0u_4\ldots u_n$ and $v''=0v_4\ldots v_n$ in $V(\Gamma_{n-2})$.
Then for any  $10\alpha\in V(10\Gamma_{n-2})$ 
we know that $d(10\alpha,u)=d(10\alpha,u')+1=d(\alpha,u'')+2$ and we know that $d(10\alpha,v)=d(10\alpha,v')+1=d(\alpha,v'')+2$. Therefore for all $10\alpha\in V(10\Gamma_{n-2})$ we count their total contribution to $M_n$ by $M_{n-2}(1,0,0)x$ in this case. Furthermore, as  $uv\in E(010\Gamma_{n-3})$ we have $uv\in E(0\Gamma_{n-1})$, and for all $0\alpha\in V(0\Gamma_{n-1})$ we count their total contribution to $M_n$ by $M_{n-1}(0,0,1)x$ by using the definition of $M_{n-1}$.
Hence, the edges  $uv\in E(010\Gamma_{n-3})$ contribute $\big(M_{n-1}(0,0,1)+M_{n-2}(1,0,0)\big)x$ to $M_n(x,y,z)$. 

\item Assume that $uv\in E(00\Gamma_{n-2})$.\\
These edges are the ones of $E(0\Gamma_{n-1})$ that are not in 
$E(010\Gamma_{n-3})$ and $C_{n-1}$ (not created 
during the connection of $00\Gamma_{n-2}$ and $010\Gamma_{n-3}$). 
Then similar to the Case 2 and using the definition \eqref{defn.Mn} of $M_n$ 
these edges contribute $\big(M_{n-1}(1,0,0)+M_{n-2}(1,1,1)\big)x$ to $M_n(x,y,z)$.

\end{enumerate}
\end{enumerate}
Combining all of the above cases and noting 
$M_{n-1}(0,0,1)x=M_{n-1}(0,0,x)$, $M_{n-2}(1,0,0)x=M_{n-2}(x,0,0)$, 
$M_{n-2}(1,1,1)x=M_{n-2}(x,x,x)$ we complete the proof. 
\end{proof}


If we write $M_n(x,y,z)=a_nx+b_ny+c_nz$,
then from the recursion in Proposition \ref{prop.main.rec},
we obtain for $ n \geq 2$
\begin{eqnarray*}
	a_n &=& a_{n-1} + c_{n-1} + 2 a_{n-1} + b_{n-2} + c_{n-2} +f_{n-1} (f_n + f_{n-2}) \\
	b_n & = & f_n f_{n-1} \\
	c_n &=& a_{n-1}+ a_{n-2} + b_{n-2} + c_{n-2} ~.
\end{eqnarray*}
Eliminating $b_n$, this is equivalent to the system
\begin{eqnarray}\label{system1}
	a_n &=& a_{n-1} + 2 a_{n-1} + c_{n-1}+c_{n-2} +
	f_{n-2}f_{n-3} + f_{n-1} f_{n-2} + f_n f_{n-1} \\ \nonumber
	c_n &=& a_{n-1}+ a_{n-2} + c_{n-2} + f_{n-2}f_{n-3} ~.
\end{eqnarray}
Let $A(t), B(t), C(t)$ be the 
generating functions of the sequences $a_n, b_n , c_n$, ($ n \geq 2$), respectively.
We already know that (\cite[A001654]{oeis})
\begin{equation}\label{GFB}
	B(t) = \sum_{n \geq 2} f_n f_{n-1} t^n = \frac{t^2}{(1+t) (1-3t+t^2)} ~.
\end{equation}
From \eqref{system1} we obtain
\begin{eqnarray}\label{system2}
	A(t) & = & (t + 2 t^2) A(t) + (t +t^2) C(t) + (1 + t + t^2 ) B(t) \\ \nonumber
	C(t) &=& (t + t^2) A(t) + t^2 C (t) + t^2 B(t) ~.
\end{eqnarray}
Solving the system  of equations \eqref{system2} and 
using \eqref{GFB} we calculate
\begin{eqnarray}\label{system3}
	A(t) & = & \frac{t^2} {(1+t)^2 (1-3t+t^2)^2} ~,\\
	C(t) &= & \frac{t^3 + 2 t^4 - t^5}{(1+t)^2 (1-3t+t^2)^2 } \nn ~.
\end{eqnarray}
Since $\Mo (\Gamma_n) = M_n(1,1,1) = a_n + b _n + c_n$, adding the generating functions 
$A(t), B(t), C(t)$ we obtain
\begin{equation}\label{MoGF}
	\sum_{n \geq 2 } \Mo (\Gamma_n) t^n = \frac{(2-t)t^2}{(1+t)^2 (1-3t+t^2)^2} ~.
\end{equation}
Using partial fractions decomposition in \eqref{MoGF} and the expansions
\begin{eqnarray}\label{expansion1}
	\frac{1}{1-3t + t^2} & =& \sum_{n\geq 0} f_{2n+2} t^n ~,\\ \label{expansion2}
	\frac{1}{(1-3t + t^2)^2} & =& \sum_{n\geq 0} \frac{1}{5} \big( 
	(4n+2) f_{2n+2} + (3n+3) f_{2n+1} \big) t^n
\end{eqnarray}
we obtain
$$
\Mo ( \Gamma_n) = 
\frac{1}{25}
\big(
(3n+2) (-1)^n + (4n-5) f_{2n+2} + (3n+3) f_{2n+1} - (4n-3) f_{2n} - 3n f_{2n-1} 
\big)~,
$$
which can be simplified to 
the closed form expression for 	$\Mo ( \Gamma_n)$ in Theorem \ref{thm.main3}.
This is another way of writing the 
sum given in Theorem \ref{thm.main}.
\begin{theorem}\label{thm.main3}
	The Mostar index of Fibonacci cube $\Gamma_n$ is 
\begin{equation}\label{formula3}
	\Mo (\Gamma_n) = \frac{1}{25} \big((3n-2)f_{2n+2} + n f_{2n+1} + (3n+2)(-1)^n \big)~.
\nn\end{equation}
\end{theorem}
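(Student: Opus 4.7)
The plan is to verify directly that the five-term expression for $\Mo(\Gamma_n)$ just derived via partial fractions reduces to the three-term closed form claimed in the theorem. Since both expressions share the summand $\tfrac{1}{25}(3n+2)(-1)^n$ with identical coefficient, it suffices to show that the Fibonacci parts agree, that is,
$$(4n-5)f_{2n+2} + (3n+3)f_{2n+1} - (4n-3)f_{2n} - 3n\, f_{2n-1} = (3n-2)f_{2n+2} + n\, f_{2n+1}.$$

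First I would move everything to one side and collect coefficients, rewriting the target identity as
$$(n-3)f_{2n+2} + (2n+3)f_{2n+1} - (4n-3)f_{2n} - 3n\, f_{2n-1} = 0.$$
Then I would apply $f_{2n+2} = f_{2n+1} + f_{2n}$ to eliminate the highest-index term. After this substitution the coefficients of $f_{2n+1}$ and $f_{2n}$ become $3n$ and $-3n$ respectively, so the identity collapses to
$$3n\bigl(f_{2n+1} - f_{2n} - f_{2n-1}\bigr) = 0,$$
which holds by the Fibonacci recurrence $f_{2n+1} = f_{2n} + f_{2n-1}$.

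Since the substantial work (deriving the generating function $\sum_{n\ge 2}\Mo(\Gamma_n)t^n$ via Proposition \ref{prop.main.rec}, performing the partial fractions decomposition of \eqref{MoGF}, and extracting coefficients via \eqref{expansion1} and \eqref{expansion2}) has already been completed just above the theorem statement, there is essentially no obstacle remaining: the proof is a one-line application of the Fibonacci recursion. The only care needed is careful bookkeeping of the linear coefficients in $n$ so that the terms collapse exactly as claimed; for a sanity check I would spot-verify the resulting formula against the small values $\Mo(\Gamma_2)=2$, $\Mo(\Gamma_3)=7$, $\Mo(\Gamma_4)=28$, $\Mo(\Gamma_5)=92$ tabulated just after Proposition \ref{prop.main.rec}.
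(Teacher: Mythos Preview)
Your proposal is correct and follows exactly the paper's approach: the paper carries out the generating-function derivation and partial-fraction extraction in the paragraphs preceding the theorem, arriving at the five-term expression, and then simply asserts that it ``can be simplified to'' the three-term formula, which is precisely the Fibonacci-recurrence reduction you spell out. Your fill-in of that simplification step is accurate.
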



\section{The Wiener index and remarks}
In \cite{Wiener.Fibo} it is shown that
\begin{equation}\label{wiener}
W(\Gamma_n)=\sum_{k=1}^{n} f_{k}f_{k+1}f_{n-k+1}f_{n-k+2}~
\end{equation}
and that this sum can be evaluated as 
\begin{equation}\label{klavzar}
W(\Gamma_n)= \frac{1}{25} \big(
4 (n+1) f_n^2 +(9n+2) f_n f_{n+1} +6n f_{n+1}^2 \big) ~.
\end{equation}
In view of our formula \eqref{formula1} of Theorem \ref{thm.main} and \eqref{wiener} 
this means that 
$$
W(\Gamma_n)= \Mo (\Gamma_n) + \sum_{k=1}^{n} (f_{k}f_{n-k+1})^2 ~.
$$
The sum above is the sequence \cite[A136429]{oeis} with generating function
$$
\frac{t(1-t)^2}{(1+t)^2 (1-3t+t^2)^2}~.
$$
Adding the generating function \eqref{MoGF} to this, we get
\begin{equation}\label{GFW}
\sum_{n\geq1} W(\Gamma_n) t^n = \frac{t}{(1+t)^2 ( 1 - 3t + t^2)^2} ~.
\end{equation}
Using partial fractions and the expansions \eqref{expansion1} and \eqref{expansion2},
$W(\Gamma_n)$ ($n \geq 2$) is found to be 
$$
W(\Gamma_n) = 
\frac{1}{25} \big( (3n+2)f_{2n+3} + (n-2) f_{2n+2} -(n+2)(-1)^n\big)
$$
which is a somewhat simpler expression than \eqref{klavzar}.\\

It is also curious that in view of their generating functions \eqref{system3} and 
\eqref{GFW} which differ only by  factor of $t$, we have
$$a_n = M_n(1,0,0) =  W(\Gamma_{n-1})~.$$

\end{document}